\newtheorem{theorem}{Theorem}[section]
\newtheorem{lemma}[theorem]{Lemma}
\theoremstyle{definition}
\theoremstyle{remark}
\newtheorem{remark}[theorem]{Remark}
\numberwithin{equation}{section}
\def\qed{\hfill\vbox{\hrule\hbox{\vrule\kern3pt\vbox{\kern6pt}\kern3pt\vrule}\hrule}\bigskip}
\def\P{{\mathbb{P}}}
\def\A{{\mathbb{A}}}
\def\mO{{\mathcal{O}}}
\newcommand{\dashedrightarrow}[1][2pt]{%
  \settowidth{\@tempdima}{$\longrightarrow$}\longrightarrow
  \makebox[-\@tempdima]{\hskip-1.5ex\color{white}\rule[0.5ex]{#1}{1pt}}
  \phantom{\longrightarrow}
}
\newcounter{mnotecounter}
\title{A note about rational surfaces as unions of affine planes}
\author{Jorge Caravantes$^1$,  J. Rafael Sendra$^1$,  David Sevilla$^2$ and Carlos Villarino$^1$}
\address{$^1$ Universidad de Alcal\'a\\ Dpto. de F\'isica y Matem\'aticas \\ 28871 Alcal\'a de Henares (Madrid), Spain \\ \vskip 1pt
$^2$  Centro U. de M\'erida\\ Universidad de Extremadura \\ Av. Santa Teresa de Jornet 38, 06800 M\'erida (Badajoz), Spain}
\thanks{The authors are partially supported by the grant PID2020-113192GB-I00/AEI/ 10.13039/501100011033 (Mathematical Visualization: Foundations, Algorithms and Applications) from the Spanish State Research Agency (Ministerio de Ciencia e Innovación). J. Caravantes and C. Villarino belong to the Research Group ASYNACS (Ref. CT-CE2019/683). D. Sevilla is a member of the research group GADAC and is partially supported by Junta de Extremadura and Fondo Europeo de Desarrollo Regional (GR21055).}
\subjclass[2010]{Primary 14E05, 14M20; Secondary 14A10}
\begin{document}

\begin{abstract}
We prove that any smooth rational projective surface over the field of complex numbers has an open covering consisting of 3 subsets isomorphic to affine planes.
\end{abstract}

\maketitle

Since all smooth rational curves are isomorphic to $\P^1$, they can be seen as the union of two affine lines. In dimension two, as a consequence of the structure Theorem \ref{tma:minimal} below, all rational surfaces admit a covering of open subsets isomorphic to the affine plane. However, up to the authors' knowledge, no general results are known on the minimal number of open subsets of such a covering, while some advances are known by computer algebrists in terms of surjectivity of parametrizations \cite{BajajRoyappa1995a,SendraSevillaVillarino2016a, CaravantesSendraSevillaVillarino2018,CaravantesSendraSevillaVillarino2021}.
In this short note we prove that all projective smooth rational surfaces behave like the projective plane in this aspect.

\section{Main result}\label{sec:Hirzebruch}

\begin{theorem}\label{tma:gordo}
Let $X$ be a projective smooth rational surface over the complex field. Then, there are three open subsets $U_0,U_1,U_2\subset X$ such that:
\begin{enumerate}
\item $U_0\cup U_1\cup U_2 =X$.
\item For all $i=0,1,2$, $U_i$ is isomorphic to the affine plane.
\end{enumerate}
\end{theorem}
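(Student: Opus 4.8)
The plan is to restate the conclusion in dual form: I will look for three curves $D_0,D_1,D_2\subset X$ such that $X\setminus D_i\cong\A^2$ for each $i$ and $D_0\cap D_1\cap D_2=\emptyset$. Then the sets $U_i:=X\setminus D_i$ do the job, since $\bigcup_i U_i=X\setminus\bigcap_i D_i=X$. By the structure Theorem \ref{tma:minimal}, $X$ is either $\P^2$ or is obtained from a Hirzebruch surface $\mathbb{F}_n$ by a finite sequence of blow-ups, so the argument splits into base cases and an inductive step running along these blow-ups.

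For the base cases I would produce the configurations explicitly. On $\P^2$, three general lines $D_i$ have affine complements and empty common intersection. On $\mathbb{F}_n$, with ruling $\pi\colon\mathbb{F}_n\to\P^1$, negative section $E$, fibres $F_t=\pi^{-1}(t)$ and positive sections $S$, every complement $\mathbb{F}_n\setminus(\Sigma\cup F_t)$ of a section $\Sigma$ together with a fibre is isomorphic to $\A^2$ (trivialise the $\P^1$-bundle over $\P^1\setminus\{t\}\cong\A^1$ and delete the section). Taking $D_0=E\cup F_{t_0}$, $D_1=S_1\cup F_{t_1}$, $D_2=S_2\cup F_{t_2}$ with the $t_i$ distinct, $E$ disjoint from the $S_i$, and the points of $S_1\cap S_2$ lying over base values different from $t_0$, a short case analysis on $\pi(p)$ shows $D_0\cap D_1\cap D_2=\emptyset$.

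The engine of the inductive step is the elementary remark that blowing up a point on the boundary preserves the affine chart: if $V\setminus D\cong\A^2$ and $p\in D$, then for the blow-up $\sigma\colon\widetilde V\to V$ at $p$ one has $\widetilde V\setminus\sigma^{-1}(D)=\sigma^{-1}(V\setminus D)\cong\A^2$, since $\sigma$ is an isomorphism away from $p$. Thus any chart whose boundary passes through the centre $p$ survives, now bounded by the total transform of its old boundary. The difficulty is that $p\notin\bigcap_i D_i$, so $p$ avoids at least one boundary and that chart is destroyed and must be rebuilt. Worse, one cannot escape by routing several boundaries through $p$: if two boundaries contained $p$, both total transforms would contain the exceptional curve $E_p$, forcing the third boundary to miss $E_p$ and hence its chart to contain the complete curve $E_p\cong\P^1$ --- impossible inside an $\A^2$. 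So the right regime is to arrange, at each centre, that exactly one boundary (a section) passes through $p$; that chart survives, and the remaining two must be rebuilt into affine charts whose boundaries cross $E_p$ at distinct points, so that together they still cover $E_p$.

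The main obstacle, and the part I expect to require the most care, is carrying this simultaneous rebuilding consistently through the whole tower of (possibly infinitely near) blow-ups. Concretely, I would fix the birational morphism $X\to\mathbb{F}_n$ and choose the base configuration so that every centre lies on exactly one section-boundary; after each blow-up the two destroyed charts are replaced by complements of a fresh ``section $\cup$ fibre'', the section chosen through a prescribed point of $E_p$ using the base-point-freeness of the section and fibre linear systems (equivalently, via the elementary transformation turning $E_p$ into a fibre of a neighbouring $\mathbb{F}_{n'}$). One then re-runs the counting argument of the base case to guarantee that the three updated boundaries still meet in the empty set. Making these choices compatibly for all centres at once, so that no two updated boundaries ever share a point over the exceptional loci, is the crux; the flexibility built into the base case --- freedom in the sections and in the deleted fibres --- is exactly what I would exploit to keep the triple intersection empty at every stage.
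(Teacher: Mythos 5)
Your dual reformulation, both base cases, and the two structural observations are sound: a chart survives the blowup exactly when its boundary contains the centre, and routing two boundaries through the centre is useless because the third chart would then have to contain the complete curve $E_p\cong\P^1$, impossible inside $\A^2$. The genuine gap is that the inductive step --- which is the entire content of the theorem, and which you yourself flag as ``the crux'' --- is never constructed, and the recipe you give for it cannot even be stated beyond the first blowup. You propose to rebuild the two destroyed charts as ``complements of a fresh section $\cup$ fibre'', chosen via base-point-freeness of linear systems on $\mathbb{F}_n$ or via elementary transformations; but the intermediate surfaces $X_i$ of the tower $X\to\cdots\to M$ are not Hirzebruch surfaces (not even $\P^1$-bundles), so ``section'', ``fibre'' and ``elementary transformation'' have no meaning on them, and an induction phrased in those terms cannot be run. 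What the rebuilding actually needs is a local fact that your proposal never states: the blowup of $\A^2$ at a point $P$ is covered by charts $U_l\simeq\A^2$, one for each line $l$ through $P$, with complement the proper transform of $l$ (this is Remark~\ref{rem:explotar_el_plano} of the paper). With that in hand, each destroyed chart is rebuilt as $U_{l_i}$ for a suitable line $l_i$ through the centre inside that chart, and all your requirements (rebuilt boundaries crossing $E_p$ at distinct points, avoiding future centres and the finitely many points left uncovered) become finitely many open conditions on the $l_i$.

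There is a second, independent problem with your invariant ``every centre lies on exactly one boundary'': at the base it is an interpolation condition, not a genericity condition, and for the section-plus-fibre ansatz it can outright fail. A section meets every fibre exactly once, so any fibre of $\mathbb{F}_n$ carrying four or more prescribed centres must itself be one of your three chosen boundary fibres; if the centres include four points on each of four distinct fibres, you have four such fibres and only three slots, and some centre then lies on no boundary at all, where your survival mechanism says nothing. The paper runs the opposite regime precisely to avoid both issues: all centres, including all infinitely near ones, are kept \emph{off} all three boundaries, i.e. in $U_0\cap U_1\cap U_2$, which is a pure avoidance (hence generic) condition; at each blowup all three charts are destroyed and all three are rebuilt by the local fact above, with finitely many avoidance conditions on the three lines (Lemma~\ref{lma:paso_inductivo}). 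If you repair your approach by allowing centres off all boundaries and rebuilding all three charts at such centres, you converge exactly to the paper's proof.
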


To prove Theorem \ref{tma:gordo}, we will use the following well-known result:

\begin{theorem}(see e.g. \cite[Theorem V.10]{Beauville1996a})\label{tma:minimal}
Every non-singular rational surface can be obtained by repeatedly blowing up either $\P^2$ or the projective bundle $\P(\mathcal{O}_{\P^1}\oplus\mathcal{O}_{\P^1}(-n))$ (the Hirzebruch surface $\Sigma_n$), for $n\ne 1$.
\end{theorem}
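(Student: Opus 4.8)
\section*{Proof proposal for Theorem \ref{tma:minimal}}

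The plan is to deduce the structure theorem from two classical ingredients: Castelnuovo's contractibility criterion (which lets us blow down $(-1)$-curves) and the classification of relatively minimal rational surfaces. Concretely, given a non-singular rational surface $X$, I would first contract $(-1)$-curves until none remain, arriving at a relatively minimal surface $X_0$, and then show that every relatively minimal rational surface is either $\P^2$ or a Hirzebruch surface $\Sigma_n$ with $n\neq 1$. Reading the contraction process backwards then exhibits $X$ as a sequence of blow-ups of such a surface, which is exactly the assertion.

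For the reduction step, recall that a $(-1)$-curve is a smooth rational curve $E$ with $E^2=-1$; by adjunction ($2g(E)-2=E^2+K_X\cdot E$) this is equivalent to $E\cong\P^1$ and $K_X\cdot E=-1$. Castelnuovo's criterion provides a birational morphism $\pi\colon X\to X'$ onto a smooth projective surface contracting $E$ to a point $p$, with $X$ the blow-up of $X'$ at $p$; each such contraction satisfies $\rho(X)=\rho(X')+1$ for the Picard number. Since $\rho$ is a positive integer, iterating terminates at a surface $X_0$ with no $(-1)$-curve, i.e. a relatively minimal surface, and $X$ is recovered from $X_0$ by finitely many point blow-ups. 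Because blow-ups and blow-downs are birational and $X$ is rational, $X_0$ is a relatively minimal rational surface, so it remains to classify these.

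For the classification I would split according to $\rho(X_0)$. If $\rho(X_0)=1$, write $\mathrm{Pic}(X_0)=\mathbb{Z}\cdot H$ with $H$ ample; since $X_0$ is rational, $\kappa(X_0)=-\infty$, so $K_{X_0}$ is not effective and $K_{X_0}=dH$ with $d<0$, making $X_0$ a del Pezzo surface with $\rho=1$. Noether's formula $K_{X_0}^2+c_2=12\chi(\mathcal{O}_{X_0})$ with $\chi(\mathcal{O}_{X_0})=1$ and $c_2=b_2+2=3$ forces $K_{X_0}^2=9$, and degree-$9$ del Pezzo surfaces are $\P^2$ (the generator $H$ satisfies $H^2=1$ and $|H|$ realizes the isomorphism $X_0\cong\P^2$). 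If $\rho(X_0)\geq 2$, the goal is to produce a base-point-free pencil, that is a morphism $f\colon X_0\to\P^1$ whose general fibre is a smooth rational curve. Minimality is then decisive: a reducible or singular fibre of a genus-$0$ fibration contains a $(-1)$-curve at its tips, contradicting the absence of $(-1)$-curves on $X_0$; hence every fibre is a smooth $\P^1$ and $f$ is a $\P^1$-bundle. Any $\P^1$-bundle over $\P^1$ is of the form $\P(\mathcal{E})$ for a rank-two bundle $\mathcal{E}$ on $\P^1$, and Grothendieck's splitting theorem gives $\mathcal{E}\cong\mathcal{O}_{\P^1}(a)\oplus\mathcal{O}_{\P^1}(b)$; after normalization $X_0\cong\P(\mathcal{O}_{\P^1}\oplus\mathcal{O}_{\P^1}(-n))=\Sigma_n$. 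Finally $n\neq 1$, since $\Sigma_1$ is the blow-up of $\P^2$ at one point and so carries a $(-1)$-curve, contradicting the minimality of $X_0$.

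The main obstacle is the classification step, and within it the production of the ruling $f\colon X_0\to\P^1$ when $\rho(X_0)\geq 2$; this is where rationality is genuinely used. One route invokes Castelnuovo's rationality criterion ($q=P_2=0$) to control the numerical invariants and then extracts a base-point-free pencil of rational curves by a Riemann--Roch and linear-systems analysis; an equivalent modern route contracts a $K_{X_0}$-negative extremal ray, which on a minimal surface with $\rho\geq 2$ must be a fibration over a smooth curve with $\P^1$ fibres, necessarily $\P^1$ by rationality. Checking that this fibration has no degenerate fibres and that the two cases $\rho=1$ and $\rho\geq 2$ are exhaustive and yield precisely $\P^2$ and the $\Sigma_n$ with $n\neq 1$ is the technical heart; by contrast the contraction and termination argument is routine once Castelnuovo's contractibility criterion is available.
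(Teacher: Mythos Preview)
The paper does not give its own proof of Theorem~\ref{tma:minimal}; it is stated as a well-known structural result with a reference to \cite[Theorem~V.10]{Beauville1996a} and then used as a black box in the proof of Theorem~\ref{tma:gordo}. There is therefore no in-paper argument to compare your proposal against.

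That said, your outline is essentially the classical proof one finds in Beauville: terminate a sequence of Castelnuovo contractions of $(-1)$-curves using the drop in Picard number, and then classify the resulting relatively minimal rational surface according to whether $\rho=1$ (giving $\P^2$) or $\rho\ge 2$ (producing a $\P^1$-fibration over $\P^1$, hence some $\Sigma_n$, with $n\ne 1$ by minimality). The sketch is sound; the only place where a reader might want a bit more care is the $\rho=1$ step, where from $K_{X_0}=dH$ and $K_{X_0}^2=9$ one gets $d^2H^2=9$, so the conclusion $H^2=1$ is not automatic and one typically argues instead that $-K_{X_0}$ is ample with $K_{X_0}^2=9$ and invokes the classification of del Pezzo surfaces (or, in the Mori-theoretic variant you mention, that the unique extremal contraction with $\rho=1$ is $X_0\to\mathrm{pt}$, forcing $X_0\cong\P^2$).
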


By Theorem \ref{tma:minimal}, there exists a chain of morphisms $\pi=\pi_1\circ\cdots\circ\pi_r:X\to M$ such that $M$ is either $\P^2$ or a Hirzebruch surface and $\pi_i:X_i\to X_{i-1}$ is the blowup of a smooth surface at a single point. Let $E$ be the exceptional divisor of $\pi$ and $E_i$ the exceptional divisor of $\pi_i$. Then, $\pi(E)\subset M$ is a finite set of closed points and $\pi_i(E_i)$ is one closed point. Moreover, $E_i\simeq\P^1$ and $E$ is a finite union of smooth rational curves (in fact, $E_1$ and the proper transforms of all the $E_2,...,E_r$). We begin by proving Theorem \ref{tma:gordo} for $X=M$ with care for the centers of the blowups:

\begin{lemma}\label{lma:caso_base}
In the above conditions, there exist three open subsets $U_0^0$, $U_1^0$, $U_2^0$ such that:
\begin{enumerate}
\item $U_0^0\cup U_1^0\cup U_2^0 =M$.
\item For all $i=0,1,2$, $U_i$ is isomorphic to the affine plane.
\item $\pi(E)\subset U_0^0\cap U_1^0\cap U_2^0$.
\end{enumerate}
\end{lemma}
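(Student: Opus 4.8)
The plan is to treat the two possibilities for $M$ separately, exhibiting the three charts as complements of carefully chosen curves and then exploiting the freedom in that choice to push the finite set $\pi(E)$ into the triple intersection. For $M=\P^2$ the construction is immediate: since $\pi(E)$ is finite, I would pick three lines $L_0,L_1,L_2$ in general position (a triangle, so that $L_0\cap L_1\cap L_2=\emptyset$) with none of them meeting $\pi(E)$. This is possible because the lines through a fixed point form a proper closed subset of the dual plane, so a general line avoids the finite set $\pi(E)$, and three general lines form a triangle. Setting $U_i^0=\P^2\setminus L_i\cong\A^2$ gives $U_0^0\cup U_1^0\cup U_2^0=\P^2\setminus(L_0\cap L_1\cap L_2)=\P^2$, while $U_0^0\cap U_1^0\cap U_2^0=\P^2\setminus(L_0\cup L_1\cup L_2)$ contains $\pi(E)$ by the choice of the $L_i$.

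The Hirzebruch case $M=\Sigma_n$ rests on one building block: for every section $C$ of the ruling $p\colon\Sigma_n\to\P^1$ and every fibre $F=p^{-1}(q)$ one has $\Sigma_n\setminus(C\cup F)\cong\A^2$. To see this I would restrict $p$ over the affine base $\P^1\setminus\{q\}\cong\A^1$, where the ruling is the trivial bundle $\A^1\times\P^1$; removing the section $C$ from a $\P^1$-bundle leaves a torsor under a line bundle (an affine line bundle), and over the affine line every such torsor is trivial because $H^1(\A^1,L)=0$, so the complement is $\A^1\times\A^1\cong\A^2$. I would then build the three charts as $U_i^0=\Sigma_n\setminus(C^{(i)}\cup F_{q_i})$ for three sections $C^{(0)},C^{(1)},C^{(2)}$ and three fibres over distinct points $q_0,q_1,q_2$. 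A direct incidence analysis shows that $U_0^0\cup U_1^0\cup U_2^0=\Sigma_n$ precisely when (i) $C^{(0)}\cap C^{(1)}\cap C^{(2)}=\emptyset$ and (ii) for each $i$ the two sections with index $\ne i$ do not meet over $q_i$: a point over a base point outside $\{q_0,q_1,q_2\}$ lies on no removed fibre and so would have to sit in the common intersection of all three sections, while a point over $q_i$ already lies on the removed fibre $F_{q_i}$ and so need only be avoided by the remaining two sections.

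To satisfy these conditions while keeping $\pi(E)$ in the triple intersection, I would take $C^{(0)},C^{(1)},C^{(2)}$ to be general members of a basepoint-free linear system $|C_0+kF|$ with $k\gg 0$, whose general member is a smooth section since $(C_0+kF)\cdot F=1$. Basepoint-freeness lets me choose each $C^{(i)}$ to miss the finite set $\pi(E)$ and to arrange $C^{(0)}\cap C^{(1)}\cap C^{(2)}=\emptyset$, because a general member avoids any prescribed finite set, in particular the finitely many points of the pairwise intersections. Finally I would pick $q_0,q_1,q_2$ distinct, outside $p(\pi(E))$, and outside the finite set of base points over which the relevant pair of sections meet; this secures conditions (i)--(ii) and makes all three removed fibres miss $\pi(E)$. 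Then $U_0^0\cap U_1^0\cap U_2^0=\Sigma_n\setminus\bigl(C^{(0)}\cup C^{(1)}\cup C^{(2)}\cup F_{q_0}\cup F_{q_1}\cup F_{q_2}\bigr)$ contains $\pi(E)$, as required.

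The main obstacle is precisely the extra requirement (3). For $n>0$ the only pairwise disjoint sections of $\Sigma_n$ are $C_0$ and $C_\infty$ — there is no third, since a section disjoint from $C_\infty$ would be a global section of $\mO_{\P^1}(-n)$, of which there are none — so the naive choice of removing two canonical disjoint sections cannot be completed to three charts, and moreover $\pi(E)$ may well meet $C_0$ or $C_\infty$. The resolution is to abandon pairwise disjointness and work instead with a movable linear system, replacing disjointness by the weaker incidence conditions (i)--(ii), which general position supplies while simultaneously dodging the fixed finite set $\pi(E)$.
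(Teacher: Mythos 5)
Your proof is correct, and for $\P^2$ it coincides with the paper's (a triangle of lines avoiding $\pi(E)$); in the Hirzebruch case, however, you take a genuinely different route, even though both proofs end up with charts of the same shape, namely complements of (a section of the ruling) $\cup$ (a fibre). The paper never needs your key lemma that the complement of an \emph{arbitrary} section plus a fibre is $\A^2$: it only ever removes \emph{constant} sections inside explicit trivializations $p^{-1}(\P^1\setminus\{P_i\})\simeq\A^1\times\P^1$, so each chart is visibly $\A^1\times(\P^1\setminus\{Q_i\})\simeq\A^2$ and no cohomology is required. The price the paper pays is that constant sections taken in different trivializations cannot be compared directly (for $n\neq 0$ a section that is constant in one trivialization is not constant in another), so instead of your symmetric covering criterion (i)--(ii) it argues greedily: after the first two charts the uncovered locus is a finite set $A$, and the third trivialization and constant section are chosen a posteriori so that the last chart contains $A\cup\pi(E)$. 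You replace that bookkeeping with two pieces of machinery the paper never invokes: triviality of $\A^1$-torsors over an affine base ($H^1(\A^1,L)=0$, together with $\mathrm{Pic}(\A^1)=0$ --- a step you elide when concluding the complement is $\A^1\times\A^1$) to identify the charts, and Bertini plus basepoint-freeness of $|C_0+kF|$, $k\gg 0$, to produce three sections avoiding $\pi(E)$ and each other's pairwise intersections. Your version buys symmetry and a stronger building block (any section works, and the covering conditions are stated once and met by genericity); the paper's buys elementarity, being self-contained coordinate bookkeeping at the cost of an ad hoc sequential construction. Your closing observation --- that for $n>0$ there is no third section pairwise disjoint from $C_0$ and $C_\infty$, so disjointness must be relaxed to controlled incidence --- is correct and explains a constraint that the paper's proof also respects, silently, by letting its removed curves meet away from the relevant points.
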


\begin{proof}
The case $M=\P^2$ is well-known. Since $\pi(E)$ is finite and we work over an infinite field, one can choose three different projective lines $L_1$, $L_2$ and $L_3$ in $\P^2$ such that $\pi(E)\cap(L_1\cup L_2\cup L_3)=\emptyset=L_1\cap L_2\cap L_3$. 

If $M$ is a Hirzebruch surface, then it is the projective bundle of a rank two vector bundle $\mO_{\P^1}\oplus\mO_{\P^1}(-m)$ over $\P^1$. This means that there is a surjective morphism $p:M\to \P^1$ such that, for any point $P\in\P^1$, $p^{-1}(\P^1-\{P\})\simeq (\P^1-\{P\})\times\P^1$. Then, since we work over an infinite field, one can choose a closed point $P_0\in \P^1-p(\pi(E))$ with its isomorphism $q_0:p^{-1}(\P^1-\{P_0\})\to \A^1\times\P^1$. Then, we choose a line $L_0=\A^1\times\{Q_0\}$, such that $q_0(\pi(E))\cap L_0$ is empty. With this choice, $U_0^0=q_0^{-1}(\A^1\times\P^1-L_0)$ is isomorphic to $\A^2$ and contains $\pi(E)$.

Then, $M-U_0^0$ is the union of two rational curves $C_1:=p^{-1}(P_0)$ and $C_2:=\overline{q_0^{-1}(L_0)}$. Choosing $P_1\in\P^1-(p(\pi(E))\cup \{P_0\})$ (again, the complement of a finite set), together with the isomorphism $q_1:p^{-1}(\P^1-\{P_1\})\to \A^1\times\P^1$, we have that $p^{-1}(\P^1-\{P_1\})$ contains $C_1$ and $C_2$ with the exception of the point $R_1:=C_2\cap p^{-1}(P_1)$ (the intersection of a section $\A^1\to\A^1\times\P^1$  with a fiber). We now choose a line $L_1=\A^1\times\{Q_1\}$ such that:
\begin{itemize}
\item  $Q_1\in\P^1$ is not in the second projection of $q_1(\pi(E))\in\A^1\times P^1$; and
\item $L_1\neq q_1(C_2)$ (i.e. we are asking a constant section not to coincide with a given one, which is an open condition for $Q_1$), so the intersection of the two curves is finite.
\end{itemize}  
 Then, $U_1^0=q_1^{-1}(\A^1\times\P^1-L_1)$ is isomorphic to $\A^2$ and contains $\pi(E)$.

Now, $M-(U_0^0\cup U_1^0)=(C_1\cup C_2)-U_1^0$ is the finite set $A:=\{R_1\}\cup q_1^{-1}(L_1\cap q_1(C_2))$. Finally, we have again the complement of a finite set to choose $P_2\in\P^1-(p(\pi(E))\cup p(A)\cup \{P_0,P_1\})$ with the isomorphism $q_2:p^{-1}(\P^1-\{P_2\})\to \A^1\times\P^1$, so we have $A\subset p^{-1}(\P^1-\{P_2\})$. We now choose $L_2=\A^1\times\{Q_2\}$ such that $Q_2\in \P^1$ is not in the second projection of the finite set $q_2(A\cup\pi(E))\subset\A^1\times\P^1$, and we define $U_2^0=q_2^{-1}(\A^1\times\P^1-L_2)\simeq\A^2$. Then $\pi(E)\subset U_2^0$ and, since $A\subset U_2^0$, we have that $U_0^0\cup U_1^0\cup U_2^0 =M$.
\end{proof}


\begin{remark}\label{rem:explotar_el_plano}
Let Bl$_P(\A^2)$ be the blowup of the affine plane at a point $P$. Then, there exists one morphism $\pi_l:U_l\simeq\A^2\mapsto\A^2$ for each affine line $l\subset\A^2$ passing through $P$, such that:
\begin{enumerate}
\item for $l_1\ne l_2$, $U_{l_1}\cup U_{l_2}=$Bl$_P(\A^2)$. 
\item if $E_{\A^2}$ is the exceptional divisor of Bl$_P(\A^2)$, for any line $l$ passing through $p$, $E_{\A^2}-U_{l}$ consists in one point, given by the isomorphism between $E_{\A^2}$ and the $\P^1$ of all lines through $P$.
\item the restriction $\pi_l|_{U_l-E_{\A^2}}$ is an isomorphism between $U_l-E_{\A^2}$ and $\A^2-l$.
\end{enumerate}
\end{remark}

\begin{lemma}\label{lma:paso_inductivo}
Let $X$ be a smooth rational surface such that there exist three open subsets $U_0,U_1,U_2\subset X$ with 
\begin{enumerate}
\item $U_0\cup U_1\cup U_2 =X$.
\item For all $i=0,1,2$, $U_i$ is isomorphic to the affine plane.
\end{enumerate}
Consider a finite set $A_1\subset U_0\cap U_1\cap U_2$. Let $P\in (U_0\cap U_1\cap U_2)-A_1$ be a point and consider $\pi:Y\to X$ to be the blowup of $X$ at $P$. Consider also a finite set $A_2$ in the exceptional divisor $E=\pi^{-1}(P)\subset Y$. Then, there are three open subsets $U_0',U_1',U_2'\subset Y$ such that 
\begin{enumerate}
\item $U_0'\cup U_1'\cup U_2' =Y$.
\item For all $i=0,1,2$, $U_i'$ is isomorphic to the affine plane.
\item Both $A_2$ and the proper transform of $A_1$ are contained in $U_0'\cap U_1'\cap U_2'$
\end{enumerate}
\end{lemma}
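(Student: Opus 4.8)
The plan is to perform the blowup separately inside each affine chart and then shrink each resulting surface to an affine plane by means of Remark~\ref{rem:explotar_el_plano}, choosing the three ``cutting lines'' carefully enough that the three pieces still cover $Y$. Since blowing up is local and $P\in U_0\cap U_1\cap U_2$, for each $i$ the restriction $\pi^{-1}(U_i)\to U_i$ is the blowup of $U_i\simeq\A^2$ at $P$; hence $\pi^{-1}(U_i)\simeq\mathrm{Bl}_P(\A^2)$ and $E\subset\pi^{-1}(U_i)$. Fixing an isomorphism $U_i\simeq\A^2$ carrying $P$ to the origin, Remark~\ref{rem:explotar_el_plano} attaches to every affine line $l_i\subset U_i$ through $P$ an open set $U_i':=U_{l_i}\subset\pi^{-1}(U_i)$ with $U_i'\simeq\A^2$, such that $U_i'\cap E=E\setminus\{d_i\}$ (where $d_i\in E=\P(T_PX)$ is the tangent direction of $l_i$) and such that $\pi$ restricts to an isomorphism $U_i'\setminus E\to U_i\setminus l_i$. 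The whole lemma then reduces to a good choice of $l_0,l_1,l_2$.

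The key geometric input is that the pairwise uncovered loci are finite. Writing $F_i:=X\setminus U_i$, the set $F_j\cap F_k=X\setminus(U_j\cup U_k)$ is closed in the projective surface $X$ and, because the $U_i$ cover $X$, is contained in $U_i\simeq\A^2$. A subvariety that is complete and affine is finite, so $N_i:=F_j\cap F_k$ is a finite set not containing $P$. For the same reason, any line through $P$ meets each $F_j$ in a finite set, since such a line is not contained in $F_j$ (it passes through $P\in U_j$).

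Now I examine when $U_0'\cup U_1'\cup U_2'=Y$. On $E$ one has $E\setminus(U_0'\cup U_1'\cup U_2')\subset\{d_0,d_1,d_2\}$, so it is enough to take the three directions pairwise distinct. Off $E$, via the isomorphisms $U_i'\setminus E\simeq U_i\setminus l_i$, a point $x\in X\setminus\{P\}$ is left uncovered exactly when $x\in l_i$ for every $i$ with $x\in U_i$. Stratifying by $T(x):=\{i:x\in U_i\}\neq\emptyset$, the uncovered points lie in the union of $N_i\cap l_i$ (case $|T(x)|=1$), of the sets $l_i\cap l_j\cap F_k$ (case $|T(x)|=2$), and of $l_0\cap l_1\cap l_2\cap(U_0\cap U_1\cap U_2)$ (case $|T(x)|=3$); by the previous paragraph each of these is finite.

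It remains to choose the lines so as to empty all these finite sets, together with the conditions $l_i\cap A_1=\emptyset$ and $d_i\notin A_2$ required by item (3). I would proceed sequentially---$l_0$, then $l_1$, then $l_2$---observing that each requirement forbids only finitely many of the infinitely many directions through $P$: avoiding a fixed finite set ($N_i$, $A_1$ or $A_2$) rules out finitely many directions, and once the previous lines are fixed the pairwise and triple conditions become the requirements that $l_i$ avoid the finite sets $(l_{i'}\cap F_k)\cap U_i$ and $(l_0\cap l_1)\cap U_2$, again finitely many; distinctness of the $d_i$ costs two more. With such lines, items (1) and (2) follow from the reduction above and Remark~\ref{rem:explotar_el_plano}, while item (3) holds because $l_i\cap A_1=\emptyset$ places the proper transform of $A_1$ in $U_i'\setminus E$ and $d_i\notin A_2$ places $A_2$ in $U_i'\cap E$, for every $i$. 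The main obstacle is precisely this covering condition away from $E$: a priori the uncovered locus could be a curve, and the whole argument rests on the finiteness of $N_i=X\setminus(U_j\cup U_k)$, that is, on the fact that no positive-dimensional complete curve can sit inside the affine chart $U_i\simeq\A^2$.
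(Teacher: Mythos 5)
Your proposal is correct and takes essentially the same route as the paper: reduce to the charts $U_{l_i}$ of Remark~\ref{rem:explotar_el_plano}, invoke the finiteness of $X\setminus(U_j\cup U_k)$ (the paper's Remark~\ref{rem:complemento_finito}) together with the finiteness of $l_i\cap(X\setminus U_k)$, and choose $l_0,l_1,l_2$ sequentially so that each avoids finitely many bad directions through $P$. The only harmless imprecision is that $l_2$ cannot avoid all of $(l_0\cap l_1)\cap U_2$, since that set contains $P$; it only needs to avoid $\left(l_0\cap l_1\right)\setminus\{P\}$, which suffices because the points of $Y$ lying over $P$ are in $E$ and are already covered once the tangent directions $d_0\neq d_1$ are distinct (indeed the paper does not even require $d_2$ to differ from $d_0,d_1$).
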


\begin{remark}\label{rem:complemento_finito}
In the conditions of Lemma \ref{lma:paso_inductivo}, note that for any $i,j=0,1,2$, $i\ne j$, $X-(U_i\cup U_j)$ is a Zariski closed subset of a projective surface which is contained in $U_k\simeq \A^2$, with $i\ne k\ne j$. Since it is a projective scheme in an affine space, it must be finite.
\end{remark}

\begin{proof}
Taking into account Remark \ref{rem:explotar_el_plano}, consider a line $l_0\subset U_0\simeq\A^2$ through $P$ such that 
\begin{itemize}
\item The intersection of $l_0$ with the finite set $X-(U_1\cup U_2)$ (see Remark \ref{rem:complemento_finito}) is empty.
\item $A_1\cap l_0=\emptyset$.
\item The intersection point of the proper transform of $l_0$ with the exceptional divisor is not in $A_2$.
\end{itemize}
 Then, we define $U_0'$ to be the open subset $U_{l_0}$ of the blowup of $U_0$ at $P$. $U_0$ is isomorphic to the affine plane, as said in Remark \ref{rem:explotar_el_plano}, and  $Y-U_0'$ consists in the proper transform of $l_0\cup (X-U_0)$. Therefore, it is one-dimensional.

Now, we choose a line $l_1\subset U_1\simeq\A^2$ such that the following open conditions are satisfied:
\begin{enumerate}
\item The intersection of $l_1$ with the finite set $X-(U_0\cup U_2)$ (see Remark \ref{rem:complemento_finito}) is empty.
\item the intersection multiplicity of $l_1$ and $l_0$ at $P$ is 1 (note that $l_1$ is smooth at $P$, so we are asking that $l_1$ is not the tangent line at $P$ to $l_0$, when we see them in $U_1$).
\item $l_1$ does not contain any point in $l_0\cap (X-U_2)$ (note that $l_0$ is irreducible and $P\subset l_0\cap U_2$, so such intersection is finite).
\item $A_1\cap l_1=\emptyset$.
\item The intersection point of the proper transform of $l_1$ with the exceptional divisor is not in $A_2$.
\end{enumerate}
Since we work over an infinite field, these conditions define a nonempty Zariski open subset to choose $l_1$ from. Now, we define $U_1'$ to be $U_{l_1}\simeq\A^2$. The whole exceptional divisor is in $U_0'\cup U_1'$. Then, $Y-(U_0'\cup U_1')$ is the proper transform of the finite sets $B_1=[l_0\cup (X-U_0)]\cap l_1$ and $B_2=X-(U_0\cup U_1)$.

Note that $P\not\in B_1\cup B_2\subset U_2$, so we choose a last line $l_2\subset U_2\simeq\A^2$ such that
\begin{itemize}
\item  the intersection of $l_2$ with the finite set $X-(U_0\cup U_1)$ (see Remark \ref{rem:complemento_finito}) is empty,
\item $(A_1\cup B_1\cup B_2)\cap l_2=\emptyset$, and
\item  the intersection point of the proper transform of $l_2$ with the exceptional divisor is not in $A_2$.
\end{itemize}  
Defining $U_2'=U_{l_2}$, one concludes the proof.
\end{proof}

\begin{remark}\label{rem:induction_high_dim}
It is likely that a generalisation of Lemma \ref{lma:paso_inductivo} to higher dimension is possible. However, it is not yet known if all rational varieties of dimension greater than 2 are covered by open subsets isomorphic to open subsets of $\A^n$ (see \cite{Gromov1989} for the original question). These varieties are known as plain \cite{Bodnar2008} or uniformly rational \cite{BogomolovBohning2013} and it is possible that the main result can be extended to higher dimension for this type of varieties. 
\end{remark}

\begin{proof}{\it (of Theorem \ref{tma:gordo})} 
By Lemma \ref{lma:caso_base}, we have that $M=X_0=U_0^0\cup U_1^0\cup U_2^0$ with $U_i^0\simeq\A^2$ and $\pi(E)\subset U_0^0\cap U_1^0\cap U_2^0$. Now we apply Lemma \ref{lma:paso_inductivo} to $\pi_i:X_i\to X_{i-1}$, choosing 
\[A_1=\left[\pi_{i}\circ\pi_{i+1}(E_{i+1})\cup\cdots\cup\pi_i\circ\cdots\circ\pi_r(E_r)\right]-\{P_i\}\]
(i.e. the points to be the center of future blowups outside $\{P_i\}$) and 
\[A_2=\left[\pi_{i+1}(E_{i+1})\cup\cdots\cup\pi_{i+1}\circ\cdots\circ\pi_r(E_r)\right]\cap E_i\]
(i.e. the points to be center of future blowups in $E_i$). Note that any curve contracted by $\pi_{i+1}\circ\cdots\circ\pi_i$ is contracted to a point in $\pi_i^{-1}(A_1)\cup A_2$. We then get $U_0^i, U_1^i, U_2^i$ from $U_0^{i-1},U_1^{i-1},U_2^{i-1}$ all isomorphic to $\A^2$ and covering $X_i$, with all centers of future blowups in the intersection of the three open subsets. Then $U_0^r$, $U_1^r$ and $U_2^r$ are the three open subsets in the statement.
\end{proof}


\bibliography{Estructura_Racionales}
\bibliographystyle{alpha}

%
%
%
%
%
%
%
%
%

\end{document}